\setlist{topsep=3pt,partopsep=0pt,itemsep=1pt,parsep=0pt}
\newtheorem{Theorem}{Theorem}[section]
\newtheorem{Lemma}{Lemma}[section]
\newtheorem{Example}{Example}[section]
\newtheorem{Claim}{Claim}[section]
\newtheorem{Definition}{Definition}[section]
\newtheorem{Notion }{Notion }[section]
\newcommand{\gaussm}[3]{\genfrac{[}{]}{0pt}{}{#1}{#2}_{#3}}
\newcommand{\A}{\mathcal{A}}
\newcommand{\B}{\mathcal{B}}
\newcommand{\N}{\mathcal{N}}
\newcommand{\f}{\mathbb{F}}
\newcommand{\pg}{{\operatorname{PG}}}
\newcommand{\ipg}{{\operatorname{IPG}}}
\newcommand{\td}{{\operatorname{TD}}}
\newcommand{\e}{{\operatorname{E}}}
\newcommand{\vn}{{\operatorname{V}}}
\newcommand{\sd}{{\operatorname{spread}}}
\def \leq {\leqslant}
\def \geq {\geqslant}
\let\oldproofname=\proofname
\renewcommand{\proofname}{\rm\bf{\oldproofname}}
\begin{document}
	
\title{The chromatic index of finite projective spaces}
	
\author[ ]{Lei Xu}
\author[ ]{Tao Feng}
\affil[ ]{School of Mathematics and Statistics, Beijing Jiaotong University, Beijing 100044, P. R. China}
\affil[ ]{xuxinlei@bjtu.edu.cn, tfeng@bjtu.edu.cn}
	
\renewcommand*{\Affilfont}{\small\it}
\renewcommand\Authands{ and }
\date{}
	
\maketitle
\footnotetext{Supported by NSFC under Grants 12271023 and 12171028}

\begin{abstract}
A line coloring of $\pg(n,q)$, the $n$-dimensional projective space over GF$(q)$, is an assignment of colors to all lines of $\pg(n,q)$ so that any two lines with the same color do not intersect. The chromatic index of $\pg(n,q)$, denoted by $\chi'(\pg(n,q))$, is the least number of colors for which a coloring of $\pg(n,q)$ exists. This paper translates the problem of determining the chromatic index of $\pg(n,q)$ to the problem of examining the existences of $\pg(3,q)$ and $\pg(4,q)$ with certain properties. In particular, it is shown that for any odd integer $n$ and $q\in\{3,4,8,16\}$, $\chi'(\pg(n,q))=(q^n-1)/(q-1)$, which implies the existence of a parallelism of $\pg(n,q)$ for any odd integer $n$ and $q\in\{3,4,8,16\}$.
\end{abstract}
	
	
\noindent {\bf Keywords: } chromatic index; projective space; spread; parallelism; resolvable Steiner system

\section{Introduction}

A {\em Steiner system $S(2,k,v)$} is a pair $\mathcal{D}=(V,\B)$ where $V$ is a set of $v$ {\em points}, and $\B$ is a collection of $k$-subsets of $V$ called {\em blocks}, such that every pair of distinct elements of $V$ is contained in exactly one block of $\B$.
A set of pairwise disjoint blocks in $\mathcal{D}$ is called a {\em partial parallel class}. A partial parallel class containing all the points of $\mathcal{D}$ is called a {\em parallel class}. $\mathcal{D}$ is said to be {\em resolvable} if $\B$ can be partitioned into parallel classes. A {\em block coloring} of $\mathcal{D}$ is a mapping $c:\B\rightarrow C$ where $C$ is a set of colors such that blocks with the same color do not pairwise intersect. A set of blocks in $\mathcal{D}$ with the same color is called a {\em color class} and each color class is a partial parallel class. The {\em chromatic index} of $\mathcal{D}$, denoted by $\chi'(\mathcal{D})$, refers to the minimum number of colors needed to color all blocks. Since the number of blocks contained in each color class is no more than $\lfloor v/k\rfloor$,
$\chi'(\mathcal{D})\geq{|\B|}/{\lfloor v/k \rfloor}$ (cf. \cite[Section 3.2]{rc}).
$\mathcal{D}$ is resolvable if and only if $\chi'(\mathcal{D})=k|\B|/v=(v-1)/(k-1)$.

A special case of a well-known conjecture of Erd\" os, Faber and Lov$\acute{\rm a}$sz \cite{e81} can be formulated as the statement that the chromatic index of any $S(2,k,v)$ is no more than $v$. This conjecture has been shown to hold for any cyclic $S(2,k,v)$ by Colbourn and Colbourn \cite{3uni}, and for any $S(2,k,v)$ with sufficiently large $v$ by Kang, Kelly, K\"uhn, Methuku and Osthus \cite{proof}.

Let $q$ be a prime power and $\vn(n+1,q)$ be the $(n+1)$-dimensional vector space over the finite field ${\mathbb F}_q$. PG$(n,q)$ denotes the $n$-dimensional projective space over GF$(q)$. A {\em $d$-subspace} of PG$(n,q)$ corresponds to a $(d+1)$-dimensional subspace of $\vn(n+1,q)$. The 0-subspaces and 1-subspaces of $\pg(n,q)$ are called {\em points} and {\em lines}, respectively.
The points of $\pg(n,q)$ together with the lines of $\pg(n,q)$ as blocks and incidence by natural containment form an $S(2,q+1,\gaussm{n+1}{1}{q})$ (cf. \cite[Proposition 2.16]{bjl99}), where $\gaussm{a}{b}{q}:=\prod_{i=1}^{b} \frac{q^{a-b+i}-1}{q^i-1}$, called the Gaussian binomial coefficient, is the number of distinct $b$-dimensional subspaces of $\vn(a,q)$.
Two subspaces are called {\em disjoint} if their intersection is the trivial subspace. Similar to the concept of partial parallel classes, a set of pairwise disjoint lines in $\pg(n,q)$ is called a {\em partial spread}. A partial spread is said to be a {\em spread} if it is a partition of all points of $\pg(n,q)$. A spread exists if and only if $n$ is odd \cite{a54}. A {\em parallelism} of $\pg(n,q)$ is a partition of all lines into spreads. A parallelism of $\pg(n,q)$ forms a resolvable $S(2,q+1,\gaussm{n+1}{1}{q})$. A {\em line coloring} of $\pg(n,q)$ is a mapping from the set of lines of $\pg(n,q)$ to a set of colors
such that any two intersecting lines have different colors. The {\em chromatic index} of $\pg(n,q)$, denoted by $\chi'(\pg(n,q))$, is the least number of colors for which a coloring of $\pg(n,q)$ exists. A parallelism of $\pg(n,q)$ exists if and only if $\chi'(\pg(n,q))=\gaussm{n}{1}{q}$.


$\pg(1,q)$ has only one line, so $\chi'(\pg(1,q))=1$. In $\pg(2,q)$ every two lines intersect, so $\chi'(\pg(2,q))=\gaussm{3}{2}{q}=q^2+q+1$. For $n\geq 3$, Beutelspacher, Jungnickel and Vanstone \cite{bjv} showed that $\chi'(\pg(n,q))\leq 4 \gaussm{n-1}{1}{q}+2q^{n-1}$, which confirms the conjecture of Erd\" os, Faber and Lov$\acute{\rm a}$sz \cite{e81} in the case of certain projective spaces. To give a lower bound for $\chi'(\pg(n,q))$ with $n\geq 3$, let $\mu_q(n)$ denote the maximum size of any partial spread of $\pg(n,q)$. Since there are $l:=\gaussm{n+1}{2}{q}$
lines in $\pg(n,q)$, $\chi'(\pg(n,q)\geq l/\mu_q(n)$. If $n$ is odd, a spread of $\pg(n,q)$ exists, so $\mu_q(n)=\gaussm{n+1}{1}{q}/(q+1)$, which yields $\chi'(\pg(n,q))\geq \gaussm{n}{1}{q}$. If $n$ is even, it is known from
\cite{b75,h72} (also, see \cite{ns} for a generalization) that $\mu_q(n)=(q^{n+1}-q^3+q^2-1)/(q^2-1)$ . Since
$$\left\lceil\frac{(q^{n+1}-1)(q^n-1)}{(q^{n+1}-q^{3}+q^2-1)(q-1)}\right\rceil=
\frac{q^n-1}{q-1}+q+\left\lceil\frac{q^4-q^3+q-q^2}{q^{n+1}-q^3+q^2-1}\right\rceil=\frac{q^n-1}{q-1}+q+1,$$
where the last equality holds because of $n\geq 3$, we have $\chi'(\pg(n,q))\geq \gaussm{n}{1}{q}+q+1$. Therefore, we obtain a lower bound for $\chi'(\pg(n,q))$.

\begin{Lemma}\label{prop:lower bound}
For any integer $n\geq 3$ and any prime power $q$,
$$
    \chi'(\pg(n,q))\geq\left\{
          \begin{array}{ll}
               \gaussm{n}{1}{q}, & \text{if}\ n\ \text{is odd}; \vspace{0.2cm}\\
               \gaussm{n}{1}{q}+q+1, & \text{if}\ n\ \text{is even}.
          \end{array}
       \right.
$$
\end{Lemma}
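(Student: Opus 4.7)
The plan is straightforward and essentially formalizes the computation already sketched in the paragraph preceding the lemma. The key observation is that any color class in a line coloring of $\pg(n,q)$ is by definition a partial spread, so its size is bounded by $\mu_q(n)$; dividing the total number of lines $\gaussm{n+1}{2}{q}$ by $\mu_q(n)$ and taking a ceiling yields
\[\chi'(\pg(n,q))\geq \left\lceil \gaussm{n+1}{2}{q}\big/\mu_q(n)\right\rceil.\]
The proof then splits by the parity of $n$.

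For $n$ odd I would invoke Andr\'e's theorem \cite{a54}, which guarantees the existence of a spread and hence gives $\mu_q(n)=\gaussm{n+1}{1}{q}/(q+1)$. Using the identity $\gaussm{n+1}{2}{q}=\gaussm{n+1}{1}{q}\gaussm{n}{1}{q}/(q+1)$, the ratio collapses exactly to $\gaussm{n}{1}{q}$, so the ceiling is redundant and the claimed bound follows.

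For $n$ even I would quote the determination $\mu_q(n)=(q^{n+1}-q^3+q^2-1)/(q^2-1)$ of \cite{b75,h72,ns}. Substituting into the ceiling and performing the algebraic rearrangement already displayed just above the lemma isolates the integer part $\gaussm{n}{1}{q}+q$ and leaves a residual fraction $(q^4-q^3+q-q^2)/(q^{n+1}-q^3+q^2-1)$. A short check using $n\geq 3$ and $q\geq 2$ shows this fraction lies strictly in $(0,1)$, so its ceiling is $1$, and summing yields the stated bound $\gaussm{n}{1}{q}+q+1$.

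No step requires any ingredient beyond the trivial upper bound on the size of a color class together with the cited values of $\mu_q(n)$, so there is no genuine obstacle. The only care needed is the elementary bounding of the residual fraction; the hypothesis $n\geq 3$ is used exactly once, namely to guarantee that the denominator $q^{n+1}-q^3+q^2-1$ strictly exceeds the positive numerator $q^4-q^3+q-q^2$.
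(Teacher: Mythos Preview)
Your proposal is correct and follows exactly the argument the paper gives in the paragraph immediately preceding Lemma~\ref{prop:lower bound}: bound each color class by $\mu_q(n)$, divide the total number of lines, and evaluate the ceiling separately for the two parities using Andr\'e's theorem for odd $n$ and the cited determination of $\mu_q(n)$ for even $n$. There is no substantive difference in method or in the handling of the residual fraction.
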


Denniston \cite{d72} employed the Klein correspondence for $\pg(3,q)$ to demonstrate that there is a parallelism of $\pg(3,q)$ for any prime power $q$. Later, Beutelspacher \cite{b74} gave a different proof for this result and, by using induction on $m$, showed that a parallelism of $\pg(2^m-1,q)$ exists for any positive integer $m$ and any prime power $q$. Baker \cite{b76} investigated the partition problem for planes in the $2m$-dimensional affine geometry over GF$(2)$ and found that there is a parallelism of $\pg(2m-1,2)$ for any positive integer $m$. Such a parallelism was also found in the context
of Preparata codes \cite{bvw}. Wettl \cite{w91} provided an explicit proof for the theorem of Baker by prescribing a collineation group. Since the existence of a parallelism of $\pg(n,q)$ implies $\chi'(\pg(n,q))=\gaussm{n}{1}{q}$, we have the following theorem.

\begin{Theorem}\label{thm:known results}
For any positive integer $m$ and any prime power $q$,
\begin{enumerate}
\item[$(1)$] {\rm \cite{b74}} $\chi'(\pg(2^m-1,q))=\gaussm{2^m-1}{1}{q}$, and
\item[$(2)$] {\rm \cite{b76}} $\chi'(\pg(2m-1,2))=\gaussm{2m-1}{1}{2}$.
\end{enumerate}
\end{Theorem}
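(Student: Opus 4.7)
The plan is straightforward given the cited literature: combine the lower bound of Lemma~\ref{prop:lower bound} with the existence of known parallelisms. Since $2^m-1$ and $2m-1$ are both odd for every positive integer $m$, Lemma~\ref{prop:lower bound} immediately gives $\chi'(\pg(n,q))\geq\gaussm{n}{1}{q}$ in both cases. Thus only the matching upper bound remains to be supplied.

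The key observation I would record explicitly is that a parallelism of $\pg(n,q)$ is itself a proper line-coloring. Indeed, a parallelism partitions the $\gaussm{n+1}{2}{q}$ lines into pairwise disjoint spreads, each of which has $\gaussm{n+1}{1}{q}/(q+1)$ lines, so the number of spreads equals
$$\frac{\gaussm{n+1}{2}{q}(q+1)}{\gaussm{n+1}{1}{q}}=\frac{(q^{n+1}-1)(q^{n}-1)(q+1)}{(q^2-1)(q-1)(q^{n+1}-1)}=\frac{q^n-1}{q-1}=\gaussm{n}{1}{q}.$$
Colouring each spread with its own colour therefore yields $\chi'(\pg(n,q))\leq\gaussm{n}{1}{q}$, which combined with Lemma~\ref{prop:lower bound} gives equality whenever a parallelism exists.

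To finish part (1), I would invoke Beutelspacher's theorem from \cite{b74}: starting from Denniston's parallelism of $\pg(3,q)$, available for every prime power $q$, Beutelspacher constructs by induction on $m$ a parallelism of $\pg(2^m-1,q)$ for all $m\geq 1$ and all prime powers $q$. For part (2), I would cite Baker's construction \cite{b76}, which exhibits a parallelism of $\pg(2m-1,2)$ for every $m$ by solving an associated partition problem for planes in the affine geometry $\operatorname{AG}(2m,2)$ over $\mathrm{GF}(2)$. In both cases the existence of the parallelism, together with the counting argument above and Lemma~\ref{prop:lower bound}, yields the stated equality.

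There is no genuine obstacle once the cited constructions are accepted as black boxes; the only real content of the present statement is the identification of the lower bound of Lemma~\ref{prop:lower bound} with the upper bound coming from a parallelism. The substantive difficulty is of course hidden in the Beutelspacher and Baker constructions themselves, but that lies outside the scope of this theorem.
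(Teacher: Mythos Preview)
Your proposal is correct and matches the paper's treatment: the paper does not give a separate proof of this theorem but simply observes, in the sentence preceding the statement, that the existence of a parallelism of $\pg(n,q)$ implies $\chi'(\pg(n,q))=\gaussm{n}{1}{q}$, and then cites \cite{b74} and \cite{b76} for the parallelisms in question. Your write-up is just a slightly more explicit version of the same argument, spelling out the spread count and the invocation of Lemma~\ref{prop:lower bound}.
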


When $n$ is even, the exact value of $\chi'(PG(n,2))$ was determined by Meszka \cite{m13}.

\begin{Theorem}\label{thm:known result-Meszka}{\rm \cite{m13}}
For any even integer $n\geq 4$, $\chi'(\pg(n,2))=\gaussm{n}{1}{2}+3$.
\end{Theorem}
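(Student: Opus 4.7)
The lower bound $\chi'(\pg(n,2)) \geq \gaussm{n}{1}{2} + 3$ is immediate from Lemma \ref{prop:lower bound} with $q=2$, so the content of the theorem lies in the matching upper bound: one must partition the $\gaussm{n+1}{2}{2}$ lines of $\pg(n,2)$ into exactly $\gaussm{n}{1}{2}+3=2^n+2$ partial spreads. My plan is to decompose $\pg(n,2)$ along a hyperplane $H\cong\pg(n-1,2)$, whose odd dimension yields a parallelism $\F=\{S_1,\dots,S_{2^{n-1}-1}\}$ by Theorem \ref{thm:known results}(2). Each line of $\pg(n,2)$ is either a \emph{type-I} line contained in $H$ or a \emph{type-II} line meeting $H$ in exactly one point; a type-II line has the form $\{x,y,x+y\}$ with $x,y\in A:=\pg(n,2)\setminus H$ and label $p:=x+y\in H$, and the type-II lines realize the natural $1$-factorization of the complete graph on $A$ indexed by the points of $H$.

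A partial spread of $\pg(n,2)$ consisting of $a$ type-I and $b$ type-II lines must satisfy $3a+b\leq 2^n-1$ (disjointness in $H$), $b\leq 2^{n-1}$ (disjointness in $A$), and $a+b\leq(2^{n+1}-5)/3$. A quick counting check shows that coloring type-I lines by $\F$ alone would squander $2^{n-1}-1$ colors on only $(2^n-1)/3$ lines each, leaving too few colors for the $\binom{2^n}{2}$ type-II lines; so each spread $S_i$ should instead be thinned by removing a few type-I lines to create small holes in $H$, through which type-II lines with matching labels and an $A$-disjoint rainbow-matching structure are threaded, producing $2^{n-1}-1$ near-maximal mixed color classes. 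The remaining lines are then absorbed into $(2^n+2)-(2^{n-1}-1)=2^{n-1}+3$ further partial spreads that again mix type-I and type-II lines subject to the same inequalities.

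For the base case $n=4$ (where $2^n+2=18$, $\mu_2(4)=9$ and the total is $155$ lines), I would exhibit the $18$ partial spreads by an explicit symmetric construction, for example exploiting the action of a subgroup of $\operatorname{PGL}_5(\f_2)$ stabilizing $H$ or a Singer cycle of order $31$, supported if necessary by a computer verification. For even $n\geq 6$, I would attempt induction by embedding $\pg(n-2,2)$ as a codimension-$2$ subspace $U\subset\pg(n,2)$; over $\f_2$, $U$ lies in exactly three hyperplanes $H_1,H_2,H_3$, each isomorphic to $\pg(n-1,2)$ and each admitting a parallelism from Theorem \ref{thm:known results}(2), and the $(2^n+2)-(2^{n-2}+2)=3\cdot 2^{n-2}$ new colors would be built symmetrically from these three parallelisms together with the lines of $\pg(n,2)$ outside $U$. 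The main obstacle is the rainbow-matching design problem at the heart of the construction: coordinating the holes chosen in each $S_i$, the labels of the threaded type-II lines, and the residual color classes so that every line appears in exactly one color, within the tight budget of $2^n+2$, is delicate; this is precisely where the absence of a spread of $\pg(n,2)$ for even $n$ manifests as the three extra colors beyond $\gaussm{n}{1}{2}$.
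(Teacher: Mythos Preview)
Your outline is plausible but incomplete: you correctly identify that the content lies in a coordination problem (threading type-II lines through holes in the hyperplane spreads so that everything fits into exactly $2^n+2$ classes), but you do not supply the combinatorial device that solves it. Saying the rainbow-matching design is ``delicate'' and deferring to a computer search is not a proof; this coordination \emph{is} the entire difficulty, and your inductive step never explains how the $3\cdot 2^{n-2}$ new colors built from the three hyperplane parallelisms are made compatible with the $2^{n-2}+2$ colors inherited from $\pg(n-2,2)$.

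The paper itself cites this theorem from Meszka rather than reproving it, but it explains (Section~4, via Theorem~\ref{idea}(2)) how Meszka's argument fits its general framework, and that framework is structurally different from yours. Instead of a single hyperplane, one removes a codimension-$2$ subspace and works with $\tpg(n,q)$ and $\ipg(n,q;n-2)$; the recursion drops the dimension by two, driven by the $(q+1)$-GDD of type $(q+1)^{q^2+1}$ obtained from $\pg(3,q)$ minus a spread. The key gadget is the property~$\e$ of $\pg(3,2)$ (Definition~\ref{e}, Example~\ref{q2}): a family of $15$ spreads covering each line of a fixed spread exactly three times and every other line exactly twice, with no covering spread containing two lines of the fixed spread. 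This structured multi-cover is precisely the replacement for your unsolved rainbow-matching problem: it dictates, for each line $L_{a,b}$ of $\pg(3,2)$ off the fixed spread, which $q^{n-3}$ colors the associated $\tpg(n-2,2)$ receives (Step~3 of the proof of Theorem~\ref{idea}), and the three conditions in Definition~\ref{e} are exactly what guarantee that intersecting lines get different colors. The base ingredients are an explicit $18$-coloring of $\pg(4,2)$ and an $18$-coloring of $\ipg(4,2;2)$ in which $12$ colors are used on lines meeting the deleted $\pg(2,2)$, both supplied in \cite{m13}. Your codimension-$2$ inductive idea with three hyperplanes through $U$ is closer in spirit, but the paper replaces those three separate parallelisms by the single property-$\e$ structure in $\pg(3,2)$, which does the bookkeeping cleanly and uniformly for all $q$.
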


This paper generalizes Meszka's work in \cite{m13} to establish a framework to count the exact value of $\chi'(\pg(n,q))$ for any integer $n\geq 5$ and any prime power $q$. A special property of $\pg(3,q)$ is introduced and explored in Section \ref{sec:2}. Making use of this property, we translate the problem of determining the chromatic index of $\pg(n,q)$ to the problem of examining the existences of $\pg(3,q)$ and $\pg(4,q)$ with certain properties in Section \ref{sec:3} (see Theorem \ref{thm:idea}). In particular, we obtain the following result, which implies the existence of a parallelism of $\pg(n,q)$ for any odd integer $n$ and $q\in\{3,4,8,16\}$.

\begin{Theorem}\label{thm:main-1}
For any odd integer $n\geq 3$ and $q\in\{3,4,8,16\}$, $\chi'(\pg(n,q))=\gaussm{n}{1}{q}$.
\end{Theorem}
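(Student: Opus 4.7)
The lower bound $\chi'(\pg(n,q)) \ge \gaussm{n}{1}{q}$ holds for every odd $n \ge 3$ by Lemma~\ref{prop:lower bound}. Hence it suffices to prove the matching upper bound, or equivalently to construct a parallelism of $\pg(n,q)$ for every odd $n \ge 3$ and every $q \in \{3,4,8\}$. The plan is to invoke Theorem~\ref{idea} from Section~3, whose whole purpose is to reduce the chromatic-index question on $\pg(n,q)$ to two concrete building-block problems on the much smaller spaces $\pg(3,q)$ and $\pg(4,q)$.

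First I would dispose of the base case $n = 3$: a parallelism of $\pg(3,q)$ exists for every prime power $q$ by Denniston's classical result (cited in Theorem~\ref{thm:known results}(1) with $m=2$), so in particular $\chi'(\pg(3,q)) = \gaussm{3}{1}{q}$ for $q \in \{3,4,8\}$. This both handles the smallest case of the theorem and supplies the low-dimensional seed needed to feed into the recursive machinery of Theorem~\ref{idea}.

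For odd $n \ge 5$, I would then apply Theorem~\ref{idea}. Its hypotheses ask for two ingredients: a parallelism (or suitably structured coloring) of $\pg(3,q)$ carrying an additional prescribed property, and a coloring of $\pg(4,q)$ compatible with the interface along which $\pg(n,q)$ gets decomposed. Granting these, Theorem~\ref{idea} glues them together using the two structural properties of $\pg(n,q)$ developed in Section~2, producing a coloring of $\pg(n,q)$ with exactly $\gaussm{n}{1}{q}$ colors and therefore a parallelism.

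The main obstacle is therefore the explicit verification of the hypotheses of Theorem~\ref{idea} for the specific prime powers $q \in \{3,4,8\}$. Concretely, one must exhibit a parallelism of $\pg(3,q)$ enjoying the additional property demanded by the theorem together with a coloring of $\pg(4,q)$ of the required form, for each of the three values of $q$. Since these are finite and comparatively small projective geometries, the verification should be tractable either via an explicit algebraic construction (e.g.\ exploiting Singer-cycle actions, Desarguesian spread liftings, or the Klein correspondence used by Denniston) or, failing an elegant construction, via a targeted finite computer search. Once both hypotheses are confirmed for every $q \in \{3,4,8\}$, Theorem~\ref{idea} immediately delivers the equality $\chi'(\pg(n,q)) = \gaussm{n}{1}{q}$ uniformly for all odd $n \ge 5$, completing the proof.
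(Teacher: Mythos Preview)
Your overall strategy matches the paper's: handle $n=3$ via the known parallelism of $\pg(3,q)$ (Theorem~\ref{thm:known results}(1) with $m=2$), and for odd $n\ge 5$ invoke Theorem~\ref{idea}. However, you have misread the hypotheses of Theorem~\ref{idea}. Part~(1), which is the only part relevant to odd $n$, requires \emph{only} that $\pg(3,q)$ have the property~E (Definition~\ref{e}); there is no hypothesis on $\pg(4,q)$ whatsoever. The $\pg(4,q)$ conditions enter only in part~(2), which treats even $n$. This distinction is not cosmetic: the condition $\chi'(\pg(4,3))=\gaussm{4}{1}{3}+4$ is in fact open (see the Concluding Remarks), so if your plan really required it, the argument would be blocked for $q=3$.

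The actual verification you need is that $\pg(3,q)$ has the property~E for each $q\in\{3,4,8\}$, and this is precisely what Lemmas~\ref{q4}, \ref{q8} and~\ref{q3} supply (for $q=4,8$ via the Singer-cycle criterion of Lemma~\ref{withE}; for $q=3$ by a direct computer search, since the criterion of Lemma~\ref{withE} fails there). Citing those three lemmas and then applying Theorem~\ref{idea}(1) is the entire proof for $n\ge 5$; there is no second ingredient to produce. One minor terminological point: property~E is not ``a parallelism with an additional property'' but a collection of $(q^4-1)/(q-1)$ spreads in which each line appears a prescribed number ($q$ or $q+1$) of times, so it is not a partition of the line set.
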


\section{Property E of $\pg(3,q)$}\label{sec:2}

Meszka \cite[Lemma 2]{m13} pointed out that $\pg(3,2)$ has a special property, which can be used to determine the exact value of $\chi'(\pg(n,2))$ for any $n>3$. In this section, we generalize this property to $\pg(3,q)$ for any prime power $q$. This property plays a significant role in the determination of $\chi'(\pg(n,q))$ for any integer $n\geq 5$ in Theorem \ref{thm:idea}.

\begin{Definition}\label{e}
$\pg(3,q)$ is said to have {\em Property $\e$} if it contains a special $\sd$ $P$ and a set  $\mathcal{S}$ of $\gaussm{4}{1}{q}=(q^4-1)/(q-1)=(q^2+1)(q+1)$ spreads such that: $(1)$ for each line $l$ in $P$, there are exactly $q+1$ spreads in $\mathcal{S}$ containing $l$, $(2)$ for each line $l$ not in $P$, there are exactly $q$ spreads in $\mathcal{S}$ containing $l$, and $(3)$ each $\sd$ in $\mathcal{S}$ cannot contain two lines of $P$ at the same time.
\end{Definition}

To better understand the concept of $\pg(3,q)$ with  Property $\e$, we count the number of lines covered by $\mathcal{S}$ in two different ways. There are $(q^4-1)/(q-1)$ spreads in $\mathcal{S}$ and a spread contains $q^2+1$ lines, so $\mathcal{S}$ covers $(q^2+1)\cdot(q^4-1)/(q-1)$ lines. On the other hand, each line in the spread $P$ occurs in exactly $q+1$ spreads in $\mathcal{S}$ and each line not in $P$ occurs in exactly $q$ spreads in $\mathcal{S}$, so $\mathcal{S}$ covers $(q^2+1)\cdot (q+1)+(\gaussm{4}{2}{q}-(q^2+1))\cdot q$ lines. These two numbers are equal.

We shall present a sufficient condition in Lemma \ref{withE} to examine Property E of $\pg(3,q)$. First we recall a basic fact concerning the lines of $\pg(n,q)$.

Since $\mathbb{F}_{q^{n+1}}$ can be regarded as an $(n+1)$-dimensional vector space over $\mathbb{F}_{q}$, each element in the cyclic group $\mathbb{F}_{q^{n+1}}^*/\mathbb{F}_{q}^*$ of order $\gaussm{n+1}{1}{q}$ can be seen as a point of $\pg(n,q)$. Let $\beta$ be a primitive element of $\mathbb{F}_{q^{n+1}}$ and $\alpha=\beta^{\gaussm{n+1}{1}{q}}$ be a primitive element of $\mathbb{F}_q$. We can identify the set of points in $\pg(n,q)$ with $$V=\{1,\beta,\beta^2,\ldots,\beta^{\gaussm{n+1}{1}{q}-1}\},$$
which is a complete set of distinct representatives for the equivalence classes of $\mathbb{F}_{q^{n+1}}^*/\mathbb{F}_{q}^*$ and is closed under addition and multiplication defined over $\mathbb{F}_{q^{n+1}}^*/\mathbb{F}_{q}^*$. The line through two distinct points $x$ and $y$ of $V$ is $\{x, y, x+y, x+\alpha y, \ldots, x+\alpha^{q-2}y\}$. Let $\sigma: x\mapsto\beta x$ be a permutation on $V$ and let $\langle\sigma\rangle$ be the cyclic group generated by $\sigma$. Then the lines of $\pg(n,q)$ can be partitioned into orbits under the action of $\langle\sigma\rangle$. By the orbit-stabilizer theorem, one can obtain the following result (see \cite[Lemma 1.1]{mmj} for example).

\begin{Lemma}\label{orbit}
The lines in $\pg(n,q)$ under the action of $\langle\sigma\rangle$ can be partitioned into
\begin{enumerate}
\item[$(i)$] $\frac{q^n-1}{q^2-1}$ orbits of length $\gaussm{n+1}{1}{q}$ if $n$ is even, or
\item[$(ii)$] a single orbit of length $\gaussm{n+1}{1}{q}/(q+1)$ and $\frac{q^{n}-q}{q^2-1}$ orbits of length $\gaussm{n+1}{1}{q}$ if $n$ is odd.
\end{enumerate}
\end{Lemma}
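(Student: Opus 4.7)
The plan is to apply the orbit--stabilizer theorem to the action of $\langle\sigma\rangle$ on the set of lines of $\pg(n,q)$, where each line is identified with the corresponding $2$-dimensional $\mathbb{F}_q$-subspace $W$ of $\mathbb{F}_{q^{n+1}}$. Since $\sigma$ corresponds to multiplication by $\beta$ in the quotient $\mathbb{F}_{q^{n+1}}^*/\mathbb{F}_q^*$ and $|\langle\sigma\rangle|=N:=\gaussm{n+1}{1}{q}$, every orbit length divides $N$, so the task reduces to computing the stabilizer of an arbitrary line.

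First I would observe that $\sigma^k$ fixes $W$ if and only if $\beta^k W=W$, that is, $W$ is closed under multiplication by $\beta^k$. This makes $W$ a vector space over the subfield $\mathbb{F}_q[\beta^k]\subseteq\mathbb{F}_{q^{n+1}}$, and $\dim_{\mathbb{F}_q}W=2$ forces $[\mathbb{F}_q[\beta^k]:\mathbb{F}_q]\mid 2$. Degree $1$ means $\beta^k\in\mathbb{F}_q^*$, i.e.\ $\sigma^k=\mathrm{id}$, so it never contributes to a proper stabilizer. Degree $2$ requires $\mathbb{F}_{q^2}$ to embed in $\mathbb{F}_{q^{n+1}}$, equivalently $2\mid n+1$, i.e.\ $n$ is odd; when this occurs, the lines with nontrivial stabilizer are exactly the $1$-dimensional $\mathbb{F}_{q^2}$-subspaces of $\mathbb{F}_{q^{n+1}}$.

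For even $n$, every line has trivial stabilizer in $\langle\sigma\rangle$, so each orbit has full length $N$ and the total number of orbits is $\gaussm{n+1}{2}{q}/N=(q^n-1)/(q^2-1)$, giving (1). For odd $n$, set $M:=(q^{n+1}-1)/(q^2-1)$; the $M$ many $\mathbb{F}_{q^2}$-lines correspond to the cyclic quotient $\mathbb{F}_{q^{n+1}}^*/\mathbb{F}_{q^2}^*$ of order $M$, and multiplication by $\beta$ projects to a generator of this quotient. Thus $\sigma$ acts transitively on the $\mathbb{F}_{q^2}$-lines, producing a single orbit of length $M=N/(q+1)$ (which is the classical $\mathbb{F}_{q^2}$-spread of $\pg(n,q)$). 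The remaining $\gaussm{n+1}{2}{q}-M$ lines split into orbits of length $N$, and a routine simplification of the Gaussian binomial gives $(q^n-q)/(q^2-1)$ such orbits, which is (2).

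The conceptually substantive step is the field-theoretic argument identifying when $\beta^k$ can stabilize a $2$-dimensional $\mathbb{F}_q$-subspace, together with the recognition that any nontrivial stabilizer forces the line to be an $\mathbb{F}_{q^2}$-subline; the transitivity on these sublines is then a one-line consequence of $\beta$ generating $\mathbb{F}_{q^{n+1}}^*$, and the remainder is bookkeeping with $q$-binomial coefficients.
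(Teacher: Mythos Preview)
Your proof is correct and follows exactly the route the paper indicates: the paper does not supply its own argument for this lemma but merely remarks that it follows from the orbit--stabilizer theorem and cites \cite[Lemma~1.1]{mmj}. Your field-theoretic computation of the stabilizer (forcing $[\mathbb{F}_q[\beta^k]:\mathbb{F}_q]\mid 2$, hence a nontrivial stabilizer only when $\mathbb{F}_{q^2}\subseteq\mathbb{F}_{q^{n+1}}$) is the standard way to flesh out that remark, and the counting of orbits is accurate.
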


By Lemma \ref{orbit}, under the action of $\langle\sigma\rangle$, all lines in $\pg(3,q)$ can be partitioned into a {\em short orbit} of length $q^2+1$ and $q$ {\em full orbits} of length $\gaussm{4}{1}{q}$. The following lemma gives a sufficient condition to examine Property E of $\pg(3,q)$. For convenience, throughout this section, we identify $(V,\cdot)$ with the cyclic group $({\mathbb Z}_{\gaussm{n+1}{1}{q}},+)$, and so $\langle\sigma\rangle$ can be seen as $({\mathbb Z}_{\gaussm{n+1}{1}{q}},+)$.

\begin{Lemma}\label{withE}
$\pg(3,q)$ has Property $\e$ if under the action of $\langle\sigma\rangle$, one can take $q$ lines from every full orbit and one line from the short orbit such that the collection of $q^2+1$ lines form a spread, written as $P_0^0$.
\end{Lemma}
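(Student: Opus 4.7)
My plan is to take $P$ to be the short orbit of $\langle\sigma\rangle$ on the lines of $\pg(3,q)$ given by Lemma \ref{orbit}, and to take $\mathcal{S}$ to be the $\langle\sigma\rangle$-orbit of the spread $S_0$ supplied by the hypothesis. Before anything else I would verify that the short orbit is itself a spread. It contains $q^2+1$ lines, and since $\sigma$ acts regularly on the $N:=\gaussm{4}{1}{q}$ points of $\pg(3,q)$, every point lies in the same number $k$ of these lines; the incidence count $Nk=(q^2+1)(q+1)=N$ then forces $k=1$. The orbit--stabilizer theorem gives that the chosen line $\ell_0\in P$ has stabilizer of order $q+1$ inside $\langle\sigma\rangle$, while every line in a full orbit has trivial stabilizer.

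Next, write the hypothetical spread as $S_0=\{\ell_0\}\cup\bigcup_{k=1}^{q}L_k$, with $L_k$ the set of $q$ lines of $S_0$ lying in the $k$-th full orbit, and set $\mathcal{S}=\{\sigma^i(S_0):0\leq i<N\}$. Each $\sigma^i(S_0)$ is the image of a spread under a collineation and is therefore a spread. Since $\sigma$ preserves $P$ setwise, $\sigma^i(S_0)\cap P=\{\sigma^i(\ell_0)\}$ is a singleton, which immediately gives condition $(3)$. For condition $(1)$, given $\ell\in P$, one has $\ell\in\sigma^i(S_0)$ iff $\sigma^{-i}(\ell)\in S_0\cap P=\{\ell_0\}$, and orbit--stabilizer gives exactly $|\mathrm{stab}(\ell_0)|=q+1$ such indices $i$. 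For condition $(2)$, given $\ell$ in the $k$-th full orbit, $\ell\in\sigma^i(S_0)$ iff $\sigma^{-i}(\ell)\in L_k$, and since stabilizers on a full orbit are trivial, each of the $q$ elements of $L_k$ produces exactly one value of $i$.

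The step I expect to require the most care is verifying that $\mathcal{S}$ actually has $N$ distinct elements, so that the counts just established match the wording of Definition \ref{e}; equivalently, one must show $\mathrm{stab}(S_0)=\{1\}$. My argument is that any $\tau\in\mathrm{stab}(S_0)$ necessarily fixes the unique short-orbit line $\ell_0$ of $S_0$, so $|\tau|$ divides $q+1$; but $\tau$ also permutes $L_k$, and since every full-orbit line has trivial $\langle\sigma\rangle$-stabilizer the action of $\langle\tau\rangle$ on the $k$-th full orbit is free, forcing $|\tau|$ to divide $|L_k|=q$. Hence $|\tau|$ divides $\gcd(q,q+1)=1$, so $\tau=1$; consequently $|\mathcal{S}|=N$, and the pair $(P,\mathcal{S})$ witnesses property $\e$.
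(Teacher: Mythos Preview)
Your proposal is correct and follows the same construction as the paper: take $P$ to be the short $\langle\sigma\rangle$-orbit and $\mathcal{S}$ to be the full $\langle\sigma\rangle$-orbit of the hypothesised spread, then check conditions (1)--(3) of Definition~\ref{e}. You are in fact more careful than the paper's own argument, explicitly verifying both that the short orbit is a spread and, via the neat $\gcd(q,q+1)=1$ stabilizer argument, that the $N$ translates of $S_0$ are pairwise distinct---points the paper simply asserts.
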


\begin{proof} Let $v=\gaussm{4}{1}{q}$ and $P=\{L_{0},L_{1},\ldots,L_{q^2}\}$ be the short orbit of $\pg(3,q)$ under the action of $\langle\sigma\rangle\cong ({\mathbb Z}_{v},+)$, where $L_i=\{j(q^2+1)+i\pmod{v}:0\leq j\leq q\}$ for $0\leq i\leq q^2$. We take $P$ as the special spread in Definition \ref{e}.

Under the action of $({\mathbb Z}_{v},+)$, without loss of generality, we can assume that $L_0\in P_0^0$. For $0\leq i\leq q^2$ and $0\leq j\leq q$, let
\begin{align}\notag
P_i^j=P_0^0+j(q^2+1)+i.
\end{align}
Note that $j(q^2+1)+i$ runs over ${\mathbb Z}_{v}$ when $i$ and $j$ run over the integer intervals
 $[0,q^2]$ and $[1,q]$ respectively.
Then $\mathcal{S}=\{P_i^j: 0\leq i\leq q^2, 0\leq j\leq q\}$ is a set of $(q^4-1)/(q-1)$ spreads. It is readily checked that $\mathcal{S}$ satisfies the three conditions in Definition \ref{e}: $(1)$ for each line $L_i\in P$, one can check that $L_i\in P_i^j$ for each $0\leq j\leq q$; (2) for each line $L\not\in P$, $L$ occurs in exactly one full orbit $\mathcal{O}$ of $\pg(3,q)$; since $P_0^0$ contains $q$ lines from $\mathcal{O}$, there are exactly $q$ spreads in $\mathcal{S}$ containing $L$; (3) each spread in $\mathcal{S}$ cannot contain two lines of $P$ at the same time.
\end{proof}

\begin{Example}\label{q2}
$\pg(3,2)$ has Property $\e$.
\end{Example}

\begin{proof} Although Meszka \cite[Lemma 2]{m13} provided an example of $\pg(3,2)$ having Property $\e$, we here provide a different construction by using Lemma \ref{withE}. Let $\f_{2^4}=\f_2[x]/\langle x^4+x+1\rangle$ where $x^4+x+1$ is an irreducible polynomial over $\f_2$ and $x\in \f_2[x]$ is a fixed primitive element of $\f_{2^4}$. Each point in $\pg(3,2)$ can be represented by $x^i$ or simply $i$ for some $0\leq i\leq 14$. By Lemma \ref{orbit}, $\pg(3,2)$ has a unique short orbit
$$P=\{\{0,5,10\}+i: i\in \{0,1,2,3,4\}\},$$
and two full orbits $\mathcal{O}_1=\{\{0,11,12\}+i: i\in\mathbb{Z}_{15}\}$ and $\mathcal{O}_2=\{\{0,2,8\}+i: i\in\mathbb{Z}_{15}\}$. We can take two disjoint lines $\{0,11,12\}+1=\{1,12,13\}$ and $\{0,11,12\}+7=\{3,4,7\}$ from $\mathcal{O}_1$ and two disjoint lines $\{0,2,8\}+6=\{6,8,14\}$ and $\{0,2,8\}+9=\{2,9,11\}$ from $\mathcal{O}_2$ such that
$$P_0^0=\{\{1,12,13\},\{3,4,7\},\{6,8,14\},\{2,9,11\},\{0,5,10\}\}$$
forms a spread satisfying the condition in Lemma \ref{withE}. Therefore, $$\mathcal{S}=\{P_i^j=P_0^0+5j+i: 0\leq i\leq 4, 0\leq j\leq 2\}$$
is a set of $15$ spreads which ensure $\pg(3,2)$ has Property $\e$. We list all $P_i^j$ explicitly as follows to facilitate the reader to check it.
\begin{center}
\begin{tabular}{c}
$P_0^0=\{\underline{\{0,5,10\}},\{1,12,13\},\{3,4,7\},\{6,8,14\},\{2,9,11\}\}$.\\
$P_0^1=\{\underline{\{0,5,10\}},\{2,3,6\},\{8,9,12\},\{4,11,13\},\{1,7,14\}\}$.\\
$P_0^2=\{\underline{\{0,5,10\}},\{7,8,11\},\{2,13,14\},\{1,3,9\},\{4,6,12\}\}$.\\
$P_1^0=\{\underline{\{1,6,11\}},\{2,13,14\},\{4,5,8\},\{0,7,9\},\{3,10,12\}\}$.\\
$P_1^1=\{\underline{\{1,6,11\}},\{3,4,7\},\{9,10,13\},\{5,12,14\},\{0,2,8\}\}$.\\
$P_1^2=\{\underline{\{1,6,11\}},\{8,9,12\},\{0,3,14\},\{2,4,10\},\{5,7,13\}\}$.\\
$P_2^0=\{\underline{\{2,7,12\}},\{0,3,14\},\{5,6,9\},\{1,8,10\},\{4,11,13\}\}$.\\
$P_2^1=\{\underline{\{2,7,12\}},\{4,5,8\},\{10,11,14\},\{0,6,13\},\{1,3,9\}\}$.\\
$P_2^2=\{\underline{\{2,7,12\}},\{9,10,13\},\{0,1,4\},\{3,5,11\},\{6,8,14\}\}$.\\
$P_3^0=\{\underline{\{3,8,13\}},\{0,1,4\},\{6,7,10\},\{2,9,11\},\{5,12,14\}\}$.\\
$P_3^1=\{\underline{\{3,8,13\}},\{5,6,9\},\{0,11,12\},\{1,7,14\},\{2,4,10\}\}$.\\
$P_3^2=\{\underline{\{3,8,13\}},\{10,11,14\},\{1,2,5\},\{4,6,12\},\{0,7,9\}\}$.\\
$P_4^0=\{\underline{\{4,9,14\}},\{1,2,5\},\{7,8,11\},\{3,10,12\},\{0,6,13\}\}$.\\
$P_4^1=\{\underline{\{4,9,14\}},\{6,7,10\},\{1,12,13\},\{0,2,8\},\{3,5,11\}\}$.\\
$P_4^2=\{\underline{\{4,9,14\}},\{0,11,12\},\{2,3,6\},\{5,7,13\},\{1,8,10\}\}$.\\
\end{tabular}
\end{center}
The underlined lines are from the spread $P$.

In this example, in order to find $P_0^0$ one can, without loss of generality, choose the line $\{0,5,10\}$ first and then search for the other four lines. A way to speed up the search is the use of Frobenius maps of $\f_{2^4}$ over $\f_2$. It is readily checked that $\{\{1,12,13\}\times 2^l \text{ (mod } 15):0\leq l\leq 3\}=\{\{1,12,13\},\{2,9,11\},\{4,3,7\},\{8,6,14\}\}=P_0^0\setminus\{\{0,5,10\}\}.$
Thus it suffices to find the initial line $\{1,11,12\}$ and then develop it through multiplying by all powers of $2$ to obtain all the other lines except for the fixed one $\{0,5,10\}$.
\end{proof}

\begin{Lemma}\label{q4}
$\pg(3,4)$ has Property $\e$.
\end{Lemma}

\begin{proof} Let $\f_{2^8}=\f_2[x]/\langle x^8+x^6+x^3+x^2+1\rangle$ where $x^8+x^6+x^3+x^2+1$ is an irreducible polynomial over $\f_2$ and $x\in \f_2[x]$ is a fixed primitive element of $\f_{2^8}\cong\f_{4^4}$. Each point in $\pg(3,4)$ can be represented by $x^i$ or simply $i$ for some $0\leq i\leq 84$. By Lemma \ref{orbit}, $\pg(3,4)$ has a unique short orbit $P=\{\{0,17,34,51,68\}+i: 0\leq i\leq 16\}$ and four full orbits
\begin{center}
\begin{tabular}{ll}
$\mathcal{O}_1=\{\{0,1,13,23,70\}+i: i\in\mathbb{Z}_{85}\}$, & $\mathcal{O}_2=\{\{0,2,26,46,55\}+i: i\in\mathbb{Z}_{85}\}$,\\
$\mathcal{O}_3=\{\{0,3,21,48,81\}+i: i\in\mathbb{Z}_{85}\}$, & $\mathcal{O}_4=\{\{0,5,36,71,79\}+i: i\in\mathbb{Z}_{85}\}$.\\
\end{tabular}
\end{center}
We take two initial lines $\{2,12,59,74,75\}$ and $\{14,29,30,42,52\}$, and then develop them through multiplying by $2^l$, $0\leq l\leq 7$, to obtain $16$ lines such that each full orbit contains exactly $4$ lines. Specifically the $16$ lines are
\begin{center}
\begin{tabular}{l}
$\mathcal{D}_1=\{\{0,1,13,23,70\}+j: j\in\{74,29,9,54\}\}\subseteq \mathcal{O}_1$,\\
$\mathcal{D}_2=\{\{0,2,26,46,55\}+j: j\in\{63,58,18,23\}\}\subseteq \mathcal{O}_2$,\\
$\mathcal{D}_3=\{\{0,3,21,48,81\}+j: j\in\{45,35,40,50\}\}\subseteq \mathcal{O}_3$,\\
$\mathcal{D}_4=\{\{0,5,36,71,79\}+j: j\in\{11,76,1,21\}\}\subseteq \mathcal{O}_4$.
\end{tabular}
\end{center}
Then $\{\{0,17,34,51,68\}\}\cup (\bigcup_{i=1}^4\mathcal{D}_i)$ forms a spread $P_0^0$ satisfying the condition in Lemma \ref{withE}. Therefore, $\pg(3,4)$ has Property $\e$.
\end{proof}

\begin{Lemma}\label{q8}
$\pg(3,8)$ has Property $\e$.
\end{Lemma}

\begin{proof}
Let $\f_{2^{12}}=\f_2[x]/\langle x^{12}+x^{10}+x^2+x+1\rangle$ where $x^{12}+x^{10}+x^2+x+1$ is an irreducible polynomial over $\f_2$ and $x\in \f_2[x]$ is a fixed primitive element of $\f_{2^{12}}\cong\f_{8^4}$. Each point in $\pg(3,8)$ can be represented by $x^i$ or simply $i$ for some $0\leq i\leq 584$. By Lemma \ref{orbit}, $\pg(3,8)$ has a unique short orbit $P=\{\{0,65,130,195,260,325,390,455,520\}+i: 0\leq i\leq 64\}$ and 8 full orbits
\begin{center}
\begin{tabular}{l}
$\mathcal{O}_1=\{\{0, 1, 37, 84, 89, 328, 404, 490, 558\}+i: i\in\mathbb{Z}_{585}\}$,\\
$\mathcal{O}_2=\{\{0,2,71,74,168,178,223,395,531\}+i: i\in\mathbb{Z}_{585}\}$,\\ $\mathcal{O}_3=\{\{0,4,142,148,205,336,356,446,477\}+i: i\in\mathbb{Z}_{585}\}$,\\
$\mathcal{O}_4=\{\{0,7,16,29,153,174,235,254,568\}+i: i\in\mathbb{Z}_{585}\}$,\\
$\mathcal{O}_5=\{\{0,8,87,127,284,296,307,369,410\}+i: i\in\mathbb{Z}_{585}\}$,\\
$\mathcal{O}_6=\{\{0,14,32,58,306,348,470,508,551\}+i: i\in\mathbb{Z}_{585}\}$,\\ $\mathcal{O}_7=\{\{0,15,155,204,255,354,389,414,467\}+i: i\in\mathbb{Z}_{585}\}$,\\
$\mathcal{O}_8=\{\{0,30,123,193,243,310,349,408,510\}+i: i\in\mathbb{Z}_{585}\}$,\\
\end{tabular}
\end{center}
We take six initial lines:
\begin{center}
\begin{tabular}{ll}
$\{1,77,163,231,258,259,295,342,347\}$,&
$\{7,12,251,327,413,481,508,509,545\}$,\\
$\{15,42,43,79,126,131,370,446,532\}$,&
$\{26,53,54,90,137,142,381,457,543\}$,\\
$\{22,121,156,181,234,352,367,507,556\}$,&
$\{18,136,151,291,340,391,490,525,550\}$,
\end{tabular}
\end{center}
and then develop them through multiplying by $2^l$, $0\leq l\leq 11$, to obtain $64$ lines such that each full orbit contains exactly $8$ lines (note that $\{22, 121, 156, 181, 234, 352, 367, 507, 556\}$ can only
generate 4 distinct lines). Specifically the 64 lines are
\begin{center}
\begin{tabular}{l}
$\mathcal{D}_1=\{\{0, 1, 37, 84, 89, 328, 404, 490, 558\}+j:j\in\{258, 159, 508, 364, 42, 375, 53, 494\}\}$,\\
$\mathcal{D}_2=\{\{0,2,71,74,168,178,223,395,531\}+j: j\in\{516, 318, 431, 143, 84, 165, 106, 403\}\}$,\\
$\mathcal{D}_3=\{\{0,4,142,148,205,336,356,446,477\}+j: j\in\{447, 51, 277, 286, 168, 330, 212, 221\}\}$,\\
$\mathcal{D}_4=\{\{0,7,16,29,153,174,235,254,568\}+j: j\in\{33, 204, 523, 559, 87, 150, 263, 299\}\}$,\\
$\mathcal{D}_5=\{\{0,8,87,127,284,296,307,369,410\}+j: j\in\{309, 102, 554, 572, 336, 75, 424, 442\}\}$,\\
$\mathcal{D}_6=\{\{0,14,32,58,306,348,470,508,551\}+j: j\in\{66, 408, 461, 533, 174, 300, 526, 13\}\}$,\\
$\mathcal{D}_7=\{\{0,15,155,204,255,354,389,414,467\}+j: j\in\{46, 100, 136, 190, 352, 406, 415, 469\}\}$,\\
$\mathcal{D}_8=\{\{0,30,123,193,243,310,349,408,510\}+j: j\in\{92, 119, 200, 227, 245, 272, 353, 380\}\}$.
\end{tabular}
\end{center}
Then $\{\{0,65,130,195,260,325,390,455,520\}\}\cup (\bigcup_{i=1}^8\mathcal{D}_i)$ forms a spread $P_0^0$ satisfying the condition in Lemma \ref{withE}. Therefore, $\pg(3,8)$ has Property $\e$.
\end{proof}

\begin{Lemma}\label{q16}
$\pg(3,16)$ has Property $\e$.
\end{Lemma}

\begin{proof}
Let $\f_{2^{16}}=\f_2[x]/\langle x^{16}+x^{14}+x^{12}+x^7+x^6+x^4+x^2+x+1\rangle$ where $x^{16}+x^{14}+x^{12}+x^7+x^6+x^4+x^2+x+1$ is an irreducible polynomial over $\f_2$ and $x\in \f_2[x]$ is a fixed primitive element of $\f_{2^{12}}\cong\f_{{16}^4}$. Each point in $\pg(3,16)$ can be represented by $x^i$ or simply $i$ for some $0\leq i\leq 4368$. By Lemma \ref{orbit}, $\pg(3,16)$ has a unique short orbit and 16 full orbits. We take 16 initial lines:
\begin{center}
\begin{tabular}{r}
$\{0,1,727,733,997,1133,1877,1917,2477,2864,3070,3172,3453,3594,3672,3928,4279\}+i$\\
$\pmod{4369}, {\text{  where    }}i \in \{117, 135, 550, 660, 782, 2563, 3254, 3529\}$;\\
$\{0,7,898,1060,1146,1362,1392,1817,2054,2287,2323,2504,2875,3723,3749,4205,4227\}+i$\\
$\pmod{4369}, {\text{  where    }}i \in \{183, 818, 1495, 1554, 1754, 1981, 2088, 3979\}$,
\end{tabular}
\end{center}
and then develop them through multiplying by $2^l$, $0\leq l\leq 15$, to obtain $256$ lines such that each full orbit contains exactly $16$ lines. The 256 lines together the line $\{257 j: 0\leq j\leq 16\}$ form a spread $P_0^0$ satisfying the condition in Lemma \ref{withE}. Therefore, $\pg(3,16)$ has Property $\e$.
\end{proof}

It only took 4 seconds by a common personal computer to search for Property $\e$ of $\pg(3,16)$.
We believe that one can use computer and apply Lemma \ref{withE} together with the use of Frobenius maps of $\f_{2^{4m}}$ over $\f_2$ to confirm Property $\e$ of $\pg(3,2^m)$ for bigger $m$. It is reasonable to conjecture that $\pg(3,2^m)$ has Property $\e$ for any positive integer $m$.

On the other hand, by exhaustive search, we found that Lemma \ref{withE} cannot be applied for $\pg(3,3)$ and $\pg(3,5)$. To deal with the case of $\pg(3,3)$, we searched for $40$ spreads satisfying the conditions in Definition \ref{e} without use of any group action.

\begin{Lemma} \label{q3}
$\pg(3,3)$ has Property $\e$.
\end{Lemma}

\begin{proof} Let $\f_{3^4}=\f_3[x]/\langle x^4+x+2\rangle$ where $x^4+x+2$ is an irreducible polynomial over $\f_3$ and $x\in \f_3[x]$ is a fixed primitive element of $\f_{3^4}$. Each point in $\pg(3,3)$ can be represented by $x^i$ or simply $i$ for some $0\leq i\leq 39$. By Lemma \ref{orbit}, the 130 lines of $\pg(3,3)$ can be given below according to orbits:
\begin{center}
\begin{tabular}{lll}
$L_i=\{0,1,4,13\}+i\pmod{40}$, & $L_{40+i}=\{0,2,17,24\}+i\pmod{40}$, \\ $L_{80+i}=\{0,5,26,34\}+i\pmod{40}$,&
$L_{120+j}=\{0,10,20,30\}+j\pmod{40}$,
\end{tabular}
\end{center}
where $i\in\mathbb{Z}_{40}$ and $0\leq j\leq 9$. We take the spread
$$P=\{L_0, L_2, L_{32}, L_{127}, L_8, L_{10}, L_{110}, L_{18}, L_{114}, L_{25}\}$$
as the special spread in Definition \ref{e}, and a set $\mathcal{S}$ of $40$ spreads of $\pg(3,3)$ satisfying the conditions in Definition \ref{e} is listed below. To save space we write $i$ instead of the line $L_i$ for $0\leq i\leq 129$.
\begin{center}
\begin{tabular}{lll}
$\{\underline{0},6,11,21,29,35,43,54,103,112\}$,&$\{\underline{0},6,11,21,29,35,43,54,103,112\}$,\\
$\{\underline{0},6,11,21,29,35,43,54,103,112\}$,&$\{\underline{0},5,20,38,66,70,75,102,109,111\}$,\\
$\{\underline{2},4,9,19,24,26,34,52,56,121\}$,&$\{\underline{2},4,9,19,24,26,34,52,56,121\}$,\\
$\{\underline{2},4,9,19,24,26,34,52,56,121\}$,&$\{\underline{2},7,12,17,22,28,36,85,113,124\}$,\\
$\{\underline{32},3,46,51,57,65,77,78,104,106\}$,&$\{\underline{32},3,46,51,57,65,77,78,104,106\}$,\\
$\{\underline{32},3,46,51,57,65,77,78,104,106\}$,&$\{\underline{32},7,12,14,22,30,37,40,44,129\}$,\\
$\{\underline{127},5,12,22,28,38,59,71,120,124\}$,&$\{\underline{127},39,58,61,64,69,72,90,105,108\}$,\\
$\{\underline{127},39,58,61,64,69,72,90,105,108\}$,&$\{\underline{127},39,58,61,64,69,72,90,105,108\}$,\\
$\{\underline{8},14,28,30,42,62,63,73,76,91\}$,&$\{\underline{8},14,16,31,33,38,63,66,99,116\}$,\\
$\{\underline{8},15,41,62,67,70,73,76,80,117\}$,&$\{\underline{8},13,15,45,74,81,117,118,119,120\}$,\\
$\{\underline{10},15,20,80,81,83,84,93,111,122\}$,&$\{\underline{10},30,40,60,87,107,125,126,128,129\}$,\\
$\{\underline{10},16,47,53,59,80,81,119,122,128\}$,&$\{\underline{10},33,42,68,93,95,102,111,115,118\}$,\\
$\{\underline{110},41,44,45,49,50,55,76,94,117\}$,&$\{\underline{110},36,42,85,87,97,100,107,109,128\}$,\\
$\{\underline{110},1,27,48,60,74,92,101,123,129\}$,&$\{\underline{110},1,27,48,49,59,60,92,93,109\}$,\\
$\{\underline{18},23,55,66,84,87,88,91,100,115\}$,&$\{\underline{18},49,79,82,83,92,107,120,124,125\}$,\\
$\{\underline{18},23,37,68,86,88,89,101,119,16\}$,&$\{\underline{18},73,36,67,68,79,88,100,101,118\}$,\\
$\{\underline{114},13,47,83,86,96,98,113,116,125\}$,&$\{\underline{114},13,23,41,45,71,75,84,86,96\}$,\\
$\{\underline{114},17,37,63,67,71,89,99,122,126\}$,&$\{\underline{114},1,47,48,53,97,98,113,115,126\}$,\\
$\{\underline{25},17,31,50,62,82,91,94,95,123\}$,&$\{\underline{25},5,20,31,50,53,79,82,94,97\}$,\\
$\{\underline{25},40,44,70,74,75,85,95,102,123\}$,&$\{\underline{25},7,27,33,55,89,96,98,99,116\}$,
\end{tabular}
\end{center}
where  the underlined numbers correspond to the lines from the spread $P$.
\end{proof}


\section{The chromatic index of PG$(n,q)$}\label{sec:3}

\begin{Definition}
Given a subspace $\pg(n-2,q)$ of $\pg(n,q)$, let $\ipg(n,q;n-2)$ denote the geometry that is obtained from $\pg(n,q)$ by removing all lines of the given $\pg(n-2,q)$.
\end{Definition}

\begin{Lemma}\label{lem3_33}$\ipg(3,q;1)$ is $(q^2+q+1)$-colorable, where $q(q+1)$ colors are used to color all lines incident with the given $\pg(1,q)$.
\end{Lemma}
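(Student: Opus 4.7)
The plan is to realize the desired coloring directly from a parallelism of $\pg(3,q)$, which is available for every prime power $q$ by Theorem \ref{thm:known results}(1) (applied with $m=2$) or, equivalently, by Denniston's original result. Let $\ell$ denote the unique line corresponding to the given $\pg(1,q)$, and let $\mathcal{S}_0$ be the spread in the parallelism containing $\ell$; write the remaining spreads as $\mathcal{S}_1,\ldots,\mathcal{S}_{q^2+q}$. Since each $\mathcal{S}_i$ with $i\ge 1$ partitions the point set of $\pg(3,q)$ but does not contain $\ell$, the $q+1$ points of $\ell$ must be covered by $q+1$ distinct lines of $\mathcal{S}_i$, one through each point; consequently $\mathcal{S}_i$ contributes exactly $q+1$ lines that meet $\ell$. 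Conversely, every line in $\mathcal{S}_0\setminus\{\ell\}$ is disjoint from $\ell$.

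The coloring I would define on $\ipg(3,q;1)=\pg(3,q)\setminus\{\ell\}$ assigns color $i$ to every line of $\mathcal{S}_i$ for $1\le i\le q^2+q$, and color $0$ to every line of $\mathcal{S}_0\setminus\{\ell\}$. Each color class is a subset of a spread, hence a partial spread, so intersecting lines receive different colors, and the coloring uses exactly $q^2+q+1=\gaussm{3}{1}{q}$ colors.

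It remains to verify that the $q(q+1)$ colors $1,\ldots,q^2+q$ account for every line incident to $\ell$. The total number of such lines (excluding $\ell$ itself) is $(q+1)(\gaussm{3}{1}{q}-1)=q(q+1)^2$. On the other hand, by the previous paragraph each $\mathcal{S}_i$ with $i\ge 1$ contributes exactly $q+1$ incident lines, so the colors $1,\ldots,q^2+q$ together cover $(q^2+q)(q+1)=q(q+1)^2$ incident lines, which is all of them. The color $0$ is used only for lines of $\mathcal{S}_0\setminus\{\ell\}$, all of which are disjoint from $\ell$.

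There is no substantive obstacle here: the whole argument is a routine bookkeeping exercise once the parallelism is invoked. The only thing to be careful about is the observation that, for a spread not containing $\ell$, each of the $q+1$ points of $\ell$ lies on exactly one line of the spread, which forces the spread to meet $\ell$ in precisely $q+1$ lines—this is what makes the count $(q^2+q)(q+1)$ match $q(q+1)^2$ exactly, so that no incident line is missed and no extra color is needed for incident lines.
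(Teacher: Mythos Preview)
Your proof is correct and follows essentially the same approach as the paper's: both take a parallelism of $\pg(3,q)$ (available by Theorem~\ref{thm:known results}(1)), single out the unique spread containing the line $\ell$ of the given $\pg(1,q)$, and observe that the remaining $q(q+1)$ spreads exhaust all lines meeting $\ell$ while the residual color covers only lines disjoint from $\ell$. Your write-up is more explicit in the counting, but the idea is identical.
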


\begin{proof}
By Theorem \ref{thm:known results}, $\chi'(\pg(3,q))=q^2+q+1$, so the lines of $\pg(3,q)$ can be partitioned into $q^2+q+1$ $\sd$s. There is exactly one $\sd$ $P$ containing the line of the given $\pg(1,q)$, and the remaining lines in $P$ are not incident with the points of $\pg(1,q)$. Thus $q(q+1)$ colors are used to color all lines incident with the points of $\pg(1,q)$.
\end{proof}

A {\em group divisible design} (GDD) $k$-GDD is a triple ($X, {\cal G},{\cal A}$) satisfying that: ($1$) $\cal G$ is a partition of a finite set $X$ into subsets (called {\em groups}); ($2$) $\cal A$ is a set of $k$-subsets of $X$ (called {\em blocks}), such that every $2$-subset of $X$ is either contained in exactly one block or in exactly one group, but not in both. If $\cal G$ contains $u_i$ groups of size $g_i$ for $1\leq i\leq r$, then $g_1^{u_1}g_2^{u_2}\cdots g_r^{u_r}$ is called the {\em type} of the GDD. A $k$-GDD of type $n^k$ is often called a {\em transversal design} and denoted by a TD$(k,n)$. It is known that a TD$(q+1, q)$ exists for any prime power $q$ (cf. \cite{bjl99}).

The following theorem is a generalization of \cite[Lemma 5]{m13} which only deals with the case $q=2$.

\begin{Theorem}\label{thm:idea}	
\begin{itemize}
\item[{\rm(1)}] Suppose that $\pg(3,q)$ has Property $\e$. Then $\chi'(\pg(n,q))=\gaussm{n}{1}{q}$ for any odd integer $n\geq 5$.
\item[{\rm(2)}] Suppose that $\pg(3,q)$ has Property $\e$ and  $\chi'(\pg(4,q))=\gaussm{4}{1}{q}+q+1$. If $\ipg(4,q;2)$ is $(\gaussm{4}{1}{q}+q+1)$-colorable, where $q^2(q+1)$ colors are used to color all lines incident with the given $\pg(2,q)$, then $\chi'(\pg(n,q))=\gaussm{n}{1}{q}+q+1$ for any even integer $n\geq 6$.
\end{itemize}
\end{Theorem}

\begin{proof} Let $\f_q^*=\langle\alpha\rangle$ where $\alpha$ is a primitive element in $\f_q$.
Take two linearly independent elements $x$ and $y$ in $\f_{q^2}$ over $\f_q$ and let $\langle x,y\rangle=\{x,y,x+y,x+\alpha y,\ldots,x+\alpha^{q-2}y\}$ be the unique line in $\pg(1,q)$ defined on $\f_{q^2}$.

\vspace{0.2cm}

\underline{First we give a construction for $\pg(n,q)$.} This construction is standard from the point of view of combinatorial design theory. Roughly speaking, we take a $(q+1)$-GDD of type $(q+1)^{q^2+1}$, and then give every point weight $q^{n-3}$ to get a $(q+1)$-GDD of type $(q^{n-3}(q+1))^{q^2+1}$. Fill in each group by using an $S(2,q+1,\gaussm{n-1}{1}{q})$ containing an $S(2,q+1,\gaussm{n-3}{1}{q})$ as a subdesign to obtain an $S(2,q+1,\gaussm{n+1}{1}{q})$, which happens to be our desired $\pg(n,q)$. The details are as follows.

Let $(X_0,\A)$ be an $S(2,q+1,\gaussm{4}{1}{q})$ formed by the points and lines of $\pg(3,q)$, where $X_0=\{(v_1,v_2): v_1\in \langle x,y\rangle, v_2\in \f_{q^2}\}\cup\{(0,v'_2): v'_2\in \langle x,y\rangle\}$ (note that $0\in \f_{q^2}$). Since $\pg(3,q)$ has a spread, assume that $\mathcal{G}_0=\{G_1,G_2,\ldots,G_{q^2+1}\}$ is a spread of $\pg(3,q)$. Write $\mathcal{A}_0=\A\setminus \mathcal{G}_0$. Then $(X_0,\mathcal{G}_0,\mathcal{A}_0)$ is a $(q+1)$-GDD of type $(q+1)^{q^2+1}$.

For each line $L_{a,b}=\{a,b,a+b,a+\alpha b,\ldots,a+\alpha^{q-2}b\}\in \mathcal{A}_0$, construct a TD$(q+1,q^{n-3})$ $(X_{L_{a,b}},\mathcal{G}_{L_{a,b}},\A_{L_{a,b}})$ where $X_{L_{a,b}}=\{(u_1,u_2): u_1\in L_{a,b}, u_2\in \f_{q^{n-3}}\}$, $\mathcal{G}_{L_{a,b}}=\{\{(u_1,u_2): u_2\in \mathbb{F}_{q^{n-3}}\}: u_1\in L_{a,b}\}$ and $\A_{L_{a,b}}=\{\{(a,u_2),(b,u'_2),(a+b,u_2+u'_2),\ldots,(a+\alpha^{q-2}b,u_2+\alpha^{q-2}u'_2)\}: u_2,u'_2\in \mathbb{F}_{q^{n-3}}\}$. Note that each $u_1\in L_{a,b}$ is either of the form $(v_1,v_2)$ where $v_1\in \langle x,y\rangle$ and $ v_2\in \f_{q^2}$, or of the form $(0,v'_2)$ where $v'_2\in \langle x,y\rangle$, and consequently each block in $\A_{L_{a,b}}$ can be seen as a line of $\pg(n,q)$ whose underlying $(n+1)$-dimensional vector space $\vn(n+1,q)$ is defined on $\f_{q^2}\oplus\f_{q^2}\oplus\f_{q^{n-3}}$.

It follows that we obtain a $(q+1)$-GDD of type $(q^{n-3}(q+1))^{q^2+1}$, $(X_1,\mathcal{G}_1,\mathcal{A}_1)$, where $X_1=\{(v_1,v_2,u_2): v_1\in \langle x,y\rangle, v_2\in \f_{q^2}, u_2\in \f_{q^{n-3}}\}\cup\{(0,v'_2,u_2): v'_2\in \langle x,y\rangle, u_2\in \f_{q^{n-3}}\}$, $\mathcal{G}_1=\{G_i'=\{(w_1,w_2,u_2): (w_1,w_2)\in G_i, u_2\in \f_{q^{n-3}}\}: 1\leq i\leq q^2+1\}$ and $\mathcal{A}_1=\bigcup_{L_{a,b}\in \mathcal{A}_0} \A_{L_{a,b}}$.

Construct $\pg(n-4,q)$ such that its underlying $(n-3)$-dimensional vector space $\vn(n-3,q)$ is defined on $\{(0,0,u_2): u_2\in \f_{q^{n-3}}\}$ (note that $0\in \f_{q^2}$) . Let $Y$ and $\cal C$ be the sets of points and lines of the $\pg(n-4,q)$, respectively. Then $(Y,{\cal C})$ forms an $S(2,q+1,\gaussm{n-3}{1}{q})$.

For each $1\leq i\leq q^2+1$, construct an $\ipg(n-2,q;n-4)$ with the set of points $G_i'\cup Y$ such that $Y$ is the set of points of its underlying $\pg(n-4,q)$. Denote by $\B_i$ the set of lines of the $\ipg(n-2,q;n-4)$. Then $\B_i\cup {\cal C}$ forms an $S(2,q+1,\gaussm{n-1}{1}{q})$ that contains an $S(2,q+1,\gaussm{n-3}{1}{q})$ as a subdesign.

Let $X_2=X_1\cup Y$ and $\A_2=\A_1\cup{\cal C}\cup(\bigcup_{i=1}^{q^2+1}\B_i)$. Then $X_2$ and $\A_2$ form the sets of points and lines of $\pg(n,q)$, respectively. That is, $(X_2,\A_2)$ is an $S(2,q+1,\gaussm{n+1}{1}{q})$.

\vspace{0.2cm}

\underline{Next we color the lines of $\pg(n,q)$.} Let
$$
    c(n,q)=\left\{
          \begin{array}{ll}
               \gaussm{n}{1}{q}, & \text{if}\ n\ \text{is odd}; \vspace{0.2cm}\\
               \gaussm{n}{1}{q}+q+1, & \text{if}\ n\ \text{is even}.
          \end{array}
       \right.
$$
We introduce the notion of property R of $\ipg(n,q;n-2)$. Recall that $\ipg(n,q;n-2)$ is the geometry that is obtained by removing all lines of a given subspace $\pg(n-2,q)$ from $\pg(n,q)$. If $\ipg(n,q;n-2)$ is $c(n,q)$-colorable such that $q^{n-2}(q+1)$ colors are used to color all lines incident with points of the given $\pg(n-2,q)$, then $\ipg(n,q;n-2)$ is said to be with {\em property R}. By Lemma \ref{lem3_33}, $\ipg(3,q;1)$ has property R, and by assumption, $\ipg(4,q;2)$ has property R.

To color the lines of $\pg(n,q)$, we proceed by induction on $n$. By Theorem \ref{thm:known results}, $\chi'(\pg(3,q))=c(3,q)$, and by assumption $\chi'(\pg(4,q))=c(4,q)$. Let $n\geq 5$. Assume that $\chi'(\pg(n-2,q))=c(n-2,q)$ and $\ipg(n-2,q;n-4)$ is $c(n-2,q)$-colorable such that $q^{n-4}(q+1)$ colors are used to color all lines incident with the points of the underlying $\pg(n-4,q)$. We shall prove that $\chi'(\pg(n,q))=c(n,q)$.

We color the lines in $\A_2$ by the following three steps.

Step 1. We color the lines in $\B_1 \cup {\cal C}$, which forms $\pg(n-2,q)$ with the set of points $G'_1\cup Y$. By the induction hypothesis, $\chi'(\pg(n-2,q))=c(n-2,q)$, so $\pg(n-2,q)$ can be colored using a set of $c(n-2,q)$ colors. We split this color set into two disjoint subsets $C_1$ and $C^*$ of cardinalities $c_1=q^{n-4}(q+1)$ and $c_2=c(n-2,q)-c_1$, respectively.

Step 2. We color the lines in $\bigcup_{i=2}^{q^2+1}\B_i$. For each $2\leq i\leq q^2+1$, $\B_i$ forms $\ipg(n-2,q;n-4)$ with the set of points $G'_i\cup Y$. By the induction hypothesis, $\B_i$ is $c(n-2,q)$-colorable in such a way that $c_1=q^{n-4}(q+1)$ colors are used to color all lines incident with the points of $\pg(n-4,q)$ which has the point set $Y$. We take a set $C_i$ of $c_1$ colors, which were not previously used, to color all lines of $\B_i$ incident with $Y$ such that $C_{j_1}\cap C_{j_2}=\emptyset$ for any $1\leq j_1<j_2\leq q^2+1$. To color the remaining lines of $\B_i$, we use $c_2=c(n-2,q)-c_1$ colors that come from the color set $C^*$. Thus the total number of colors is $(q^2+1)\cdot c_1+c_2=c(n,q)$.

Step 3. We color the lines in $\A_1=\bigcup_{L_{a,b}\in \mathcal{A}_0} \A_{L_{a,b}}$ using the colors from $\bigcup_{i=1}^{q^2+1}C_i$. Recall that for each $L_{a,b}\in \mathcal{A}_0$, $\A_{L_{a,b}}$ forms a TD$(q+1,q^{n-3})$.

\begin{Claim}\label{claim:TD}
This $\td$ is resolvable, that is, $\A_{L_{a,b}}$ can be partitioned into parallel classes, each of which contains every point of $X_{L_{a,b}}$ exactly once.
\end{Claim}

\begin{proof} Let $\f_{q^{n+1}}^*=\langle\beta\rangle$ such that  $\alpha=\beta^{\frac{q^{n+1}-1}{q-1}}$. Construct a TD$(q+2,q^{n-3})$ on $X_{L_{a,b}}\cup\{(a+\beta b,u_2): u_2\in \mathbb{F}_{q^{n-3}}\}$ with the set of groups $\mathcal{G}_{L_{a,b}}\cup\{(a+\beta b,u_2): u_2\in \mathbb{F}_{q^{n-3}}\}$ and the set of blocks$\A'_{L_{a,b}}=\{\{(a,u_2),(b,u'_2),(a+b,u_2+u'_2),\ldots,(a+\alpha^{q-2}b,u_2+\alpha^{q-2}u'_2), (a+\beta b,u_2+\beta u'_2)\}: u_2,u'_2\in \mathbb{F}_{q^{n-3}}\}$. We shall show that for every $j\in \mathbb{F}_{q^{n-3}}$,
$\N_j=\{B\setminus{(a+\beta b,j)}: (a+\beta b,j)\in B\in\mathcal{A}'_{L_{a,b}}\}$ is a parallel class of $(X_{L_{a,b}},\mathcal{G}_{L_{a,b}},\mathcal{A}_{L_{a,b}})$.

Clearly $\N_j\subseteq \mathcal{A}_{L_{a,b}}$. Since $|\N_j|=q^{n-3}$, it suffices to show that the blocks in $\N_j$ are pairwise disjoint. Assume that there are two distinct blocks $B_1,B_2\in \N_j$ satisfying $|B_1\cap B_2|\geq 1$, where $\{(a,u_2),(b,u'_2)\}\subset B_1$ and $\{(a,u''_2),(b,u'''_2)\}\subset B_2$. Then $j=u_2+\beta u'_2=u''_2+\beta u'''_2$, and either $u_2=u''_2$ or $u'_2=u'''_2$ or there exists $0\leq k\leq q-2$ such that $u_2+\alpha^k u'_2=u''_2+\alpha^k u'''_2$. Clearly the first two cases are impossible. The last case is also impossible since $\alpha^k\neq\beta$ for any $0\leq k\leq q-2$.
\end{proof}

Therefore, $\A_{L_{a,b}}$ is $q^{n-3}$-colorable. We shall choose $q^{n-3}$ colors from $\bigcup_{i=1}^{q^2+1}C_i$ to color lines in $\A_{L_{a,b}}$ for each $L_{a,b}\in \mathcal{A}_0$.

By assumption, $\pg(3,q)$ has Property E. Without loss of generality, let $P=\mathcal{G}_0=\{G_1,G_2$, $\ldots,G_{q^2+1}\}$ be the special $\sd$ of $\pg(3,q)$ in Definition \ref{e}, and $\mathcal{S}=\{P_{G_i}^j:1\leq j\leq q+1,1\leq i\leq q^2+1\}$ be a set of $(q^4-1)/(q-1)$ $\sd$s satisfying the three conditions in Definition \ref{e}, where for each line $G_i\in P$, there are exactly $q+1$ spreads $P_{G_i}^j$ ($1\leq j\leq q+1$) in $\mathcal{S}$ containing $G_i$.

For each $1\leq i\leq q^2+1$, we split the color set $C_i$ into $q+1$ disjoint subsets $C_i^1,C_i^2,\cdots,C_i^{q+1}$, each of cardinality $q^{n-4}$, and assign each subset $C_i^j$, $1\leq j\leq q+1$, to the spread $P_{G_i}^j$. It follows that each spread in $\mathcal{S}$ owns $q^{n-4}$ colors and any two different spreads in $\mathcal{S}$ own different colors. By Property E of $\pg(3,q)$, every line $L_{a,b}\notin P=\mathcal{G}_0$ (i.e., $L_{a,b}\in \mathcal{A}_0$) of $\pg(3,q)$ occurs in exactly $q$ spreads in $\mathcal{S}$, so the number of colors assigned to $L_{a,b}$ is $q^{n-4}\cdot q=q^{n-3}$. Using the $q^{n-3}$ colors, we can color all of the lines in $\A_{L_{a,b}}$.

\vspace{0.2cm}

\underline{Finally we check the correctness of the coloring.} Take any two different intersecting lines $L_1$ and $L_2$ from $\A_2=\A_1\cup{\cal C}\cup(\bigcup_{i=1}^{q^2+1}\B_i)$. It suffices to show that they have assigned different colors.

If $L_1$ and $L_2$ are both from ${\cal C}\cup(\bigcup_{i=1}^{q^2+1}\B_i)$, then it follows from Steps 1 and 2 that $L_1$ and $L_2$ have different colors.

If $L_1$ and $L_2$ are both from $\A_1=\bigcup_{L_{a,b}\in \mathcal{A}_0} \A_{L_{a,b}}$, then either they come from the same $\A_{L_{a,b}}$, or from two different ones, say $\A_{L_{a,b}}$ and $\A_{L_{a',b'}}$, where $L_{a,b}$ and $L_{a',b'}$ are different lines in $\mathcal{A}_0$. In the former case $L_1$ and $L_2$ are assigned different colors by Step 3. In the latter case, since $L_1$ and $L_2$ are intersecting, $L_{a,b}$ must intersect with $L_{a',b'}$, and so $L_{a,b}$ and $L_{a',b'}$ belong to different spreads in $\cal S$; by Step 3, they have different colors.

If $L_1$ is from $\A_1$ and $L_2$ is from $\cal C$, since $\A_1$ is defined on $X_1$ and $\cal C$ is defined on $Y$, $L_1$ and $L_2$ are disjoint, a contradiction.

If $L_1$ is from $\A_1$ and $L_2$ is from $\bigcup_{i=1}^{q^2+1}\B_i$, then $L_1\in \A_{L_{a,b}}$ for some $L_{a,b}\in \mathcal{A}_0$, and $L_2\in \B_i$ for some $1\leq i\leq q^2+1$. Since $L_1$ and $L_2$ are intersecting, $L_{a,b}$ must intersect with $G_i$, and so $L_{a,b}$ and $G_i$ belong to different spreads in $\cal S$; by Step 3, they have different colors.


\vspace{0.2cm}

Therefore, combining Lemma \ref{prop:lower bound}, we have $\chi'(\pg(n,q))=c(n,q)$. Note that $\A_1\cup(\bigcup_{i=2}^{q^2+1}\B_i)$ forms $\ipg(n,q;n-2)$, and the above coloring procedure implies that this $\ipg(n,q;n-2)$ is with property R, that is, this $\ipg(n,q;n-2)$ is $c(n,q)$-colorable such that $q^{n-2}(q+1)$ colors from $\bigcup_{i=2}^{q^2+1} C_i$ are used to color all lines incident with points of the $\pg(n-2,q)$ which has the set of points $G'_1\cup Y$ and the set of lines  $\B_1\cup{\cal C}$.
\end{proof}

\begin{algorithm}[tbht]\setstretch{1.3}
\caption{Construction for $\pg(n,q)$ with $n\geq 5$}\label{suan1}
\KwIn{Let $\alpha$ be a primitive element in $\f_q$.}

\SetKwInOut{KIN}{Step $1$}
\KIN{Construct $\pg(3,q)$ on $\f_{q^{4}}\times\{0\}$ (note that $0\in \f_{q^{n-3}}$) with the set $\A$ of lines. Let $\mathcal{G}_0=\{G_1,G_2,\ldots,G_{q^2+1}\}$ be a spread of $\pg(3,q)$. Write $\mathcal{A}_0=\A\setminus \mathcal{G}_0$.}

\SetKwInOut{KIN}{Step $2$}
\KIN{For each line $L_{a,b}=\{a,b,a+b,a+\alpha b,\ldots,a+\alpha^{q-2}b\}\in \mathcal{A}_0$, let $$\A_{L_{a,b}}=\{\{(a,u_2),(b,u'_2),(a+b,u_2+u'_2),\ldots,(a+\alpha^{q-2}b,u_2+\alpha^{q-2}u'_2)\}: u_2,u'_2\in \mathbb{F}_{q^{n-3}}\}$$ and $\mathcal{A}_1=\bigcup_{L_{a,b}\in \mathcal{A}_0} \A_{L_{a,b}}$.}

\SetKwInOut{KIN}{Step $3$}
\KIN{Construct $\pg(n-4,q)$ on $\{0\}\times\f_{q^{n-3}}$ (note that $0\in \f_{q^4}$) with the set $\cal C$ of lines.}

\SetKwInOut{KIN}{Step $4$}
\KIN{For each $1\leq i\leq q^2+1$, construct an $\ipg(n-2,q;n-4)$ on $\langle G_i\rangle\times \f_{q^{n-3}}$ such that the underlying $\pg(n-4,q)$ is constructed on $\{0\}\times\f_{q^{n-3}}$. Denote by $\B_i$ the set of lines of the $\ipg(n-2,q;n-4)$.}

\KwOut{$\A_2=\A_1\cup{\cal C}\cup(\bigcup_{i=1}^{q^2+1}\B_i)$ forms the set of lines of $\pg(n,q)$, which is constructed on $\f_{q^4}\times \f_{q^{n-3}}$.}
\end{algorithm}

\begin{proof}[{\bf Proof of Theorem \ref{thm:main-1}}]
By Lemmas \ref{q4}, \ref{q8}, \ref{q16} and \ref{q3}, $\pg(3,4)$, $\pg(3,8)$, $\pg(3,16)$ and $\pg(3,3)$ have Property E, so applying Theorem \ref{thm:idea}(1), we have $\chi'(\pg(n,q))=\gaussm{n}{1}{q}$ for any odd integer $n\geq 5$ and $q\in\{3,4,8,16\}$. The case of $n=1$ is trivial. For $n=3$, by Theorem \ref{thm:known results}(2), $\chi'(\pg(3,q))=\gaussm{3}{1}{q}$ for any prime power $q$.
\end{proof}

Finally we give a proof of Theorem \ref{thm:known result-Meszka} by employing Theorem \ref{thm:idea}. It is known that $\chi'(\pg(4,2))=\gaussm{4}{1}{2}+3=18$ (see \cite[Example 2]{m13}) and $\ipg(4,2;2)$ is $(\gaussm{4}{1}{2}+3)$-colorable where $12$ colors are used to color all lines incident with the given $\pg(2,2)$ (see \cite[Lemma 4(b)]{m13}). Since $\pg(3,2)$ has Property $\e$ by Example \ref{q2}, we can apply Theorem \ref{thm:idea} to obtain that $\chi'(\pg(n,2))=\gaussm{n}{1}{2}+3$ for any even integer $n\geq 6$.

\section{Concluding remarks}

Theorems \ref{thm:known results}, \ref{thm:known result-Meszka} and \ref{thm:main-1} motivate us to conjecture that the lower bound for $\chi'(\pg(n,q))$ in Lemma \ref{prop:lower bound} is tight. However, when $n$ is even, it is only known by Theorem \ref{thm:known result-Meszka} that this lower bound for $\chi'(\pg(n,q))$ is tight for $q=2$. It is meaningful to examine whether $\chi'(\pg(4,3))=\gaussm{4}{1}{3}+4=44$ and whether $\ipg(4,3;2)$ is $(\gaussm{4}{1}{3}+4)$-colorable where $36$ colors are used to color all lines incident with the given $\pg(2,q)$. If so, applying Theorem \ref{thm:idea} would yield the exact value of $\chi'(\pg(n,3))$ for any even integer $n\geq 4$. We have no idea how to solve it so far. Actually the points and lines of $\pg(4,3)$ form an $S(2,4,121)$. Very little is known about the chromatic index of block-colorings of an $S(2,4,v)$ with $v\equiv 1\pmod{12}$ (see \cite[Section 10]{rr}).



Algorithm \ref{suan1} and Algorithm \ref{suan2} are refined from Theorem \ref{thm:idea}. Algorithm \ref{suan1} gives all lines of $\pg(n,q)$. Algorithm \ref{suan2} counts the chromatic index of $\pg(n,q)$ as long as the data in the input area are provided. Take a coloring of $\pg(5,4)$ for example. Start from $\pg(5,4)$ constructed from Algorithm \ref{suan1}. It follows from Theorem \ref{thm:known results},  Lemma \ref{q4} and Lemma \ref{lem3_33} that $\pg(3,4)$ is $21$-colorable, $\pg(3,4)$ has Property $\e$ and $\ipg(3,4;1)$ is $21$-colorable where $20$ colors are used to color all lines incident with the given $\pg(1,4)$. Then apply Algorithm \ref{suan2} to obtain a parallelism of $\pg(5,4)$, which is 341-colorable.

An interesting question is whether $\pg(3,q)$ has Property E for any prime power $q$. By Theorem \ref{thm:idea}, the solution to this problem will give the existence of a parallelism of $\pg(n,q)$ for any odd integer $n$ and any prime power $q$.

\begin{algorithm}[H]\setstretch{1.35}
\caption{Line coloring of $\pg(n,q)$ with $n\geq 5$}\label{suan2}
\KwIn{$\pg(3,q)$ has Property $\e$. $\pg(n-2,q)$ is $c(n-2,q)$-colorable. $\ipg(n-2,q;n-4)$ is $c(n-2,q)$-colorable where $q^{n-4}(q+1)$ colors are used to color all lines incident with the points of the underlying $\pg(n-4,q)$.}

\SetKwInOut{KIN}{Step $1$}
\KIN{Use $c(n-2,q)$ colors to color the lines in $\B_1 \cup {\cal C}$. Split this color set into two disjoint subsets $C_1$ and $C^*$ of sizes $c_1=q^{n-4}(q+1)$ and $c_2=c(n-2,q)-c_1$, respectively.}

\SetKwInOut{KIN}{Step $i$}
\KIN{($2\leq i\leq q^2+1$). Take a set $C_i$ of $c_1$ colors, which were not previously used, to color all lines of $\B_i$ that are incident with $\{0\}\times \f_{q^{n-3}}$ such that $C_{j_1}\cap C_{j_2}=\emptyset$ for any $1\leq j_1<j_2\leq q^2+1$. Use $c_2$ colors that come from the color set $C^*$ to color all the remaining lines of $\B_i$.}

\SetKwInOut{KIN}{Step $q^2+2$}
\KIN{Let $P=\mathcal{G}_0=\{G_1,G_2$, $\ldots,G_{q^2+1}\}$ be the special $\sd$ of $\pg(3,q)$, and $\mathcal{S}=\{P_{G_i}^j:1\leq j\leq q+1,1\leq i\leq q^2+1\}$ be a set of $(q^2+1)(q+1)$ $\sd$s satisfying the three conditions in Definition \ref{e}. For each $1\leq i\leq q^2+1$, we split the color set $C_i$ into $q+1$ disjoint subsets $C_i^1,C_i^2,\cdots,C_i^{q+1}$, each of cardinality $q^{n-4}$, and assign each subset $C_i^j$, $1\leq j\leq q+1$, to the spread $P_{G_i}^j$. Then for each $L_{a,b}\in \mathcal{A}_0$, the number of colors assigned to $L_{a,b}$ is $q^{n-4}\cdot q=q^{n-3}$. Use the $q^{n-3}$ colors to color all of the lines in $\A_{L_{a,b}}$ (apply Claim \ref{claim:TD}).}

\KwOut{$\A_2=\A_1\cup{\cal C}\cup(\bigcup_{i=1}^{q^2+1}\B_i)$ is $c(n,q)$-colorable.}
\end{algorithm}

\subsection*{Acknowledgements}

The authors thank the anonymous referees for their valuable comments and suggestions that helped improve the equality of the paper.



\end{document}